\newtheorem{Lemma}{Lemma}
\newtheorem{Definition}{Definition}
\newtheorem{Theorem}{Theorem}
\newcommand{\gm}{\Gamma}
\begin{document}
\title{A Combinatorial Approach to Quantum Error Correcting Codes}
\author{German Luna, Samuel Reid, Bianca De Sanctis, Vlad Gheorghiu}

\date{Version of \today}
\maketitle

\begin{abstract}
Motivated from the theory of quantum error correcting codes, we investigate a combinatorial problem that involves a symmetric $n$-vertices colourable graph and a group of operations (colouring rules) on the graph: find the minimum sequence of operations that maps between two given graph colourings. We provide an explicit algorithm for computing the solution of our problem, which in turn is directly related to computing the distance (performance) of an underlying quantum error correcting code. Computing the distance of a quantum code is a highly non-trivial problem and our method may be of use in the construction of better codes.
\end{abstract}

\section{Introduction}\label{sct1}
With the recent advances in quantum information processing, protecting a physical system from environmental noise became an ultimate desideratum in the construction of a large scale quantum computer. Quantum error correcting codes, introduced about a decade ago \cite{PhysRevA.52.R2493,PhysRevA.54.1098}, play an important role in this respect and allow the protection of fragile quantum computation against undesired errors via the use of redundancy. 

An important class of quantum error correcting codes are the \emph{stabilizer codes} \cite{quantph.9705052}, which are the quantum analog of classical linear codes \cite{MacWilliamsSloane:TheoryECC}. Most known quantum error correcting codes belong to the class of stabilizer codes, which in turn are equivalent \cite{quantph.0111080} to the so-called \emph{graph codes}. The latter, as their name states, are codes built using an undirected symmetric graph on $n$ vertices. For more details about their properties see \cite{quantph.0602096, PhysRevA.78.042303, quantph.0709.1780}.

The performance of quantum codes is characterized by the \emph{distance} of the code, a positive integer that specifies how many errors the code tolerates (see e.g. \cite{NielsenChuang:QuantumComputation} for a detailed discussion). The classical analog of the quantum distance is the well-known Hamming distance \cite{MacWilliamsSloane:TheoryECC}. A vast amount of work has been dedicated to constructing quantum error correcting codes with the highest possible distance, see \cite{Grassl:codetables} for a list of up-to-date codes and bounds on the distance. 

Given a quantum error correcting code, determining its distance is highly non-trivial and in general requires numerical methods. Inspired by this, in the current article we investigate a combinatorial problem involving graphs and operations on graphs, whose solution provides the distance of a corresponding graph code. Our solution is algorithmic and, although scales exponentially\footnote{Up to our current knowledge, determining the distance of a quantum code seems to be a computationally-hard problem.} with the number of vertices of the graph, can nevertheless be of great use in the construction of quantum error correcting codes. The solution we provide is elegantly described by linear systems over finite fields with the properties of the solution space providing the distance of the code. Although the space of solutions itself can be compactly described using a suitable basis, computing the distance reduces to searching among \emph{all} possible solutions, and is this searching that makes the problem exponentially-hard. 

Even though the problem we investigate is motivated from the theory of quantum error correcting codes, we believe, given its combinatorial nature, that it may be of intrinsic interest to the discrete mathematics community, see e.g. \cite{doi:10.1137/090752237} for a related work. 

\section{Graph Labellings and Diagonal Distance}\label{sct2}
Let $G_{n} = (V,E)$ be a graph with $n$ vertices where $V$ is the vertex set of $G_{n}$ and $E$ is the edge set of $G_{n}$. A graph labelling of $G_{n}$ is a function $L: V \rightarrow \mathbb{Z}\big/ 2\mathbb{Z}$, which can be written as an element of $\left(\mathbb{Z}\big/ 2\mathbb{Z}\right)^{n}$ by using the ordering of the vertices. For example, if we have a graph with $V = \{v_{1},v_{2},v_{3},v_{4},v_{5}\}$ and a graph labelling $L = \{(v_{1},0),(v_{2},1),(v_{3},0),(v_{4},0),(v_{5},1)\}$, then we write the graph labelling simply as $(0,1,0,0,1) \in \left(\mathbb{Z}\big/ 2\mathbb{Z}\right)^{5}$ by requiring that the $i^{\text{th}}$ entry in the element of $\left(\mathbb{Z}\big/ 2\mathbb{Z}\right)^{5}$ is the label of the $i^{\text{th}}$ vertex of the graph for $1 \leq i \leq 5$. In order to simplify notation we will henceforth refer to the graph labelling $L$ as the element of $\left(\mathbb{Z}\big/ 2\mathbb{Z}\right)^{n}$ instead of as the function $L: V \rightarrow \left(\mathbb{Z}\big/ 2\mathbb{Z}\right)^{n}$. In order to define the diagonal distance of a graph $G_{n}$ we need to define two operations which indicate the possible ways to move between graph labellings. Defining these operations requires the notion of the adjacency matrix $\gm$ of a graph $G_{n}$ which is defined as the $n \times n$ matrix with $\gm_{ij} = 1$ if $(v_{i},v_{j}) \in E$ (there is an edge between $v_{i}$ and $v_{j}$) and $\gm_{ij} = 0$ if $(v_{i},v_{j}) \notin E$ (there is no edge between $v_{i}$ and $v_{j}$). Observe that $\left(\mathbb{Z}\big/ 2\mathbb{Z}\right)^{n}$ is the set of all graph labellings of a graph with $n$ vertices and so our operations our defined as follows.

\begin{Definition}
\label{dfn1}
Define two operations applied to the graph labelling $L$ of $G_{n}$ for arbitrary $v_{i} \in V$ by,
\begin{itemize}
\item $Z_{i}: \left(\mathbb{Z}\big/ 2\mathbb{Z}\right)^{n} \rightarrow \left(\mathbb{Z}\big/ 2\mathbb{Z}\right)^{n}$ defined by $L_{i} \mapsto L_{i} + 1 \mod{2}$.
\item $X_{i}: \left(\mathbb{Z}\big/ 2\mathbb{Z}\right)^{n} \rightarrow \left(\mathbb{Z}\big/ 2\mathbb{Z}\right)^{n}$ defined by $L_{j} \mapsto L_{j} + \gm_{ij} \mod{2}$, $\forall 1 \leq j \leq n$.
\end{itemize}
where $L_{i}$ denotes the $i^{\text{th}}$ entry in $L$.
\end{Definition}
\begin{figure}[h!]
\begin{center}
\includegraphics[scale=0.4]{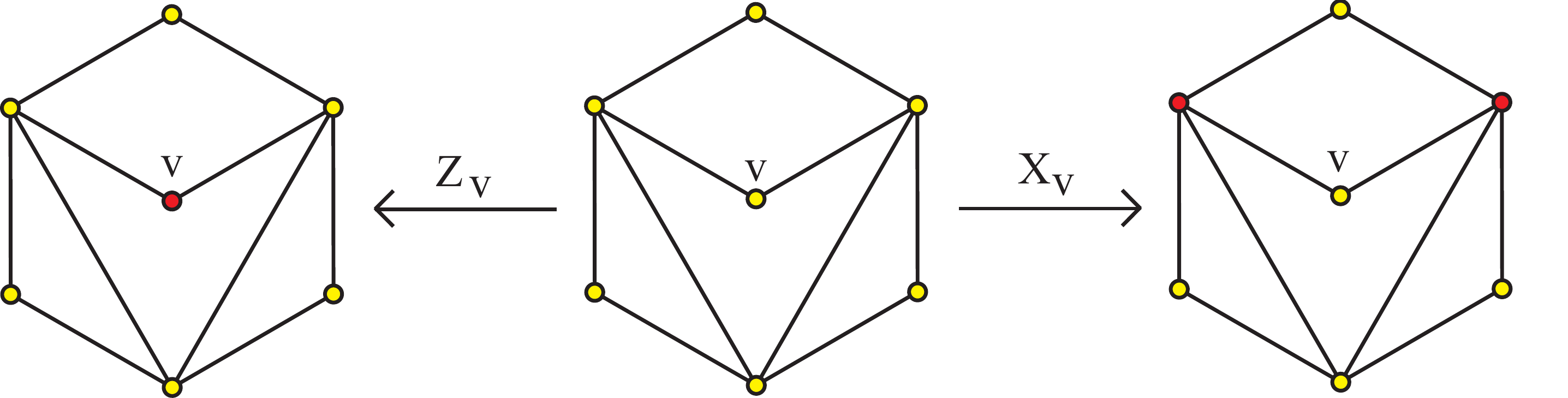}
\caption{An example of the two operations over $\left(\mathbb{Z}\big/ 2\mathbb{Z}\right)^{n}$ applied to a graph with 7 vertices.}
\end{center}
\end{figure}
We name an element in the group of operations at the $i^{\text{th}}$ vertex of $G_{n}$ by $O_{i} \in \langle Z_{i},X_{i}\rangle$; that is, $O_{i}$ is an arbitrary element of the group of operations generated by the operations $Z_{i}$ and $X_{i}$ with composition of maps as the group operation. The characteristic $\eta$-function is defined by, 
\begin{equation}\label{eqn1}
\eta(O_{i}) =
\begin{cases}
1 \;\;\; \text{if} \; O_{i} \neq I \\
0 \;\;\;\text{if} \; O_{i} = I
\end{cases}
\end{equation}
where $I: \left(\mathbb{Z}\big/ 2\mathbb{Z}\right)^{n} \rightarrow \left(\mathbb{Z}\big/ 2\mathbb{Z}\right)^{n}$ is the identity map. We are now prepared to define the diagonal distance of a graph. Intuitively, the diagonal distance of a graph $G_{n}$ is the minimum number of composed $O_{i}$'s that change the labelling $L$ of $G_{n}$ to a non-zero number of other labellings before returning to the original labelling $L$. Formally, summing the characteristic $\eta$-function over a collection of $O_{i}$'s counts the number of composed $O_{i}$'s different from the identity. By ensuring that the composition (denoted by $\prod$) of the $O_{i}$'s equals the identity map $I$, we guarantee that the graph returns to its original labelling. Notice that the diagonal distance is invariant under different numberings of the vertices, as long as the notation remains consistent throughout the usage of this algorithm.
\begin{Definition}
\label{dfn2}
The diagonal distance $\Delta(G_{n})$ of a graph $G_{n}$ is defined as,
\begin{equation}\label{eqn2}
\Delta(G_{n}) = \min\left\{\sum_{i=1}^{n} \eta(O_{i}) > 0 \; \bigg| \; \prod_{i=1}^{n}O_{i} = I, O_{i} \in \langle Z_{i},X_{i} \rangle, 1 \leq i \leq n \right\}
\end{equation}
\end{Definition}

\section{Computing Diagonal Distance Over $\mathbb{Z}\big/ 2\mathbb{Z}$}\label{sct3}
Given a graph $G_{n}$, we construct the $n \times 2n$ matrix $\Lambda$ by concatenating $I_{n}$ and $\gm$ as,
\begin{equation}\label{eqn3}
\Lambda = [I_{n} \; | \; \gm]
\end{equation}
where $I_{n}$ is the $n \times n$ identity matrix and $\gm$ is the adjacency matrix of $G_{n}$. We think of $I_{n}$ as the part of $\Lambda$ which keeps track of $Z_{i}$ applied to $G_{n}$ and $\gm$ as the part of $\Lambda$ which keeps track of $X_{i}$ applied to $G_{n}$. Let $k^{T}$ be an arbitrary nonzero $1 \times 2n$ vector with entries in $\mathbb{Z}\big/ 2\mathbb{Z}$ and write
\begin{equation}\label{eqn4}
k^{T} = [k_{1} \cdot\cdot\cdot k_{n} \; | \; k_{n+1} \cdot\cdot\cdot k_{2n}]
\end{equation}
If $\Lambda k = 0$ $(\text{mod} \; 2)$, that is, if $k$ is in the nullspace of $\Lambda$ over $\mathbb{Z}\big/ 2\mathbb{Z}$, then we claim that $k$ contains the information about the application of $O_{i}$'s to $G_{n}$ which changes the labelling $L$ of $G_{n}$ to a non-zero number of other labellings before returning to the original labelling $L$. In order to justify this claim we require the characteristic $\chi$-function defined for $1 \leq i \leq n$ by
\begin{equation}\label{eqn5}
\chi(k_{i}) =
\begin{cases}
1 \;\;\; \text{if} \; k_{i} \neq 0 \; \text{or} \; k_{i+n} \neq 0 \\
0 \;\;\;\text{if} \; k_{i} = k_{i+n} = 0
\end{cases}
\end{equation}

By relating the characteristic $\chi$-function and the characteristic $\eta$-function to each other we can relate elements $O_{i} \in \langle Z_{i},X_{i} \rangle$ to a vector $k$ that is constructed to encode the information in $O_{i}$. More precisely, we consider an arbitrary set of group operations $\{O_{i}\}_{1 \leq i \leq n}$ and for each $O_{i} = Z_{i}^{z_{i}} X_{i}^{x_{i}}$ construct the vector $k$ with the $i^{\text{th}}$ entry given by $k_{i} = z_{i} \;(mod \;2)$ and the $(i + n)^{\text{th}}$ entry given by $k_{i + n} = x_{i} \;(mod\; 2)$, where $1 \leq i \leq n$.
\begin{Lemma}\label{lma1}
Let $\{O_i\}_{i=1}^{n}$ be an arbitrary collection of $O_i\in \langle Z_i,Xi \rangle$. Then for any $O_{i}$ in this collection, there exists $z_{i},x_{i} \in \mathbb{Z}\big/ 2\mathbb{Z}$ such that $O_{i} = Z_{i}^{z_{i}} X_{i}^{x_{i}}$ and
\begin{equation}\label{eqn6}
\eta(O_{i}) = \chi(k_{i})
\end{equation}
where $k$ is the $2n \times 1$ vector with $k_{i} = z_{i}$ and $k_{i+n} = x_{i}$ for $1 \leq i \leq n$.
\end{Lemma}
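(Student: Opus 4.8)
The plan is to first determine the algebraic structure of $\langle Z_i, X_i\rangle$ and then match the two characteristic functions by a short case analysis. The starting observation is that $Z_i$ and $X_i$ are commuting involutions on $\left(\mathbb{Z}\big/ 2\mathbb{Z}\right)^{n}$. Indeed, $Z_i$ alters only the $i^{\text{th}}$ coordinate while $X_i$ alters the coordinates $j$ with $\gm_{ij}=1$; since the graph carries no self-loops we have $\gm_{ii}=0$, so $X_i$ fixes the $i^{\text{th}}$ coordinate and the two maps act on disjoint sets of coordinates, hence commute. Moreover $Z_i^2=I$ and $X_i^2=I$, because adding a fixed element of $\mathbb{Z}\big/ 2\mathbb{Z}$ to a label twice returns the original label. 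From commutativity together with the involution property it follows that $\langle Z_i, X_i\rangle$ is an abelian group in which every element can be put in the reduced form $Z_i^{z_i}X_i^{x_i}$ with $z_i,x_i\in\mathbb{Z}\big/ 2\mathbb{Z}$: any word in $Z_i,X_i$ can be sorted and its exponents reduced modulo $2$. This yields the existence of $z_i,x_i$ with $O_i=Z_i^{z_i}X_i^{x_i}$, and correspondingly the entries $k_i=z_i$ and $k_{i+n}=x_i$ of the vector $k$.

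It then remains to verify $\eta(O_i)=\chi(k_i)$. Unwinding Definitions \ref{dfn1} and the $\chi$-function, I would note that $\chi(k_i)=0$ precisely when $z_i=x_i=0$, while $\eta(O_i)=0$ precisely when $O_i=I$. In the case $O_i=I$ I would select the representation $(z_i,x_i)=(0,0)$, which is always valid since $Z_i^{0}X_i^{0}=I$; then $\chi(k_i)=0=\eta(O_i)$. In the case $O_i\neq I$, every admissible representation must satisfy $(z_i,x_i)\neq(0,0)$, since $(0,0)$ would force $O_i=I$; hence $\chi(k_i)=1=\eta(O_i)$. In both cases the two functions agree, which proves the identity \eqref{eqn6}.

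The hard part, and the point where the existential quantifier in the statement is genuinely needed, is the possible non-uniqueness of the representation $Z_i^{z_i}X_i^{x_i}$. If the vertex $v_i$ is isolated then $\gm_{ij}=0$ for all $j$, so $X_i=I$ and the pairs $(0,0)$ and $(0,1)$ represent the very same operation $I$; the latter choice would spuriously give $\chi(k_i)=1$ while $\eta(O_i)=0$. The lemma remains correct because it only asserts the existence of a suitable $(z_i,x_i)$, and the case analysis above always commits to the representation $(0,0)$ for the identity. Thus the essential care is to fix this canonical choice rather than an arbitrary one; once that is done, the remainder is a direct reading of the definitions of $\eta$ and $\chi$.
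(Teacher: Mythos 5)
Your proof is correct and follows essentially the same route as the paper's: reduce the exponents modulo $2$ using $Z_i^2 = X_i^2 = I$ to obtain the form $Z_i^{z_i}X_i^{x_i}$, then match $\eta$ and $\chi$ by a two-case analysis on whether $O_i = I$. Your closing observation about isolated vertices is in fact a point the paper's proof silently skips --- the paper asserts that $Z_i^{z}X_i^{x} = I$ forces $z = x = 0$, which fails when $X_i = I$ --- so your explicit commitment to the representation $(0,0)$ for the identity is a small but genuine improvement in rigor over the published argument.
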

\begin{proof}
Let $O_{i} = Z_{i}^{z_{i}} X_{i}^{x_{i}}$, where $z_{i},x_{i} \in \mathbb{Z}$. Note that since labellings are elements of $\mathbb{Z}\big/ 2\mathbb{Z}$, then $Z_{i}^2 = I$ and $X_{i}^2 = I$ since these maps act by adding the same quantity twice. Thus $Z_{i}^{z_{i}} X_{i}^{x_{i}} = Z_{i}^{z_{i} \mod{2}} X_{i}^{x_{i} \mod{2}}$ and so $O_{i}$ is equivalent to $Z_{i}^{z} X_{i}^{x}$ for some $x,z \in \mathbb{Z}\big/ 2\mathbb{Z}$.  Construct the vector k as stated in the lemma. If $\eta(O_{i}) = 1$, then $Z_{i}^{z} X_{i}^{x} \neq I$ and so either $z \neq 0$ or $x \neq 0$, which implies that $\chi(k_{i}) = 1$. If $\eta(O_{i}) = 0$, then $Z_{i}^{z} X_{i}^{x} = I$ and so $z = x = 0$, which implies that $\chi(k_{i}) = 0$. Therefore, $\eta(O_{i}) = \chi(k_{i})$.
\end{proof}

We now prove another lemma which restricts the possible vectors $k$, and thus the possible set of group operations $\{O_{i}\}_{1 \leq i \leq n}$, which can satisfy the definition of diagonal distance.

\begin{Lemma}\label{lma2}
Let $\{O_{i}\}_{1 \leq i \leq n}$ be a set of group operations with each $O_{i} = Z_{i}^{z_{i}} X_{i}^{x_{i}}$, where $z_{i},x_{i} \in \mathbb{Z}\big/ 2\mathbb{Z}$. Then,
\begin{equation}\label{eqn7}
\prod_{i=1}^{n} O_{i} = I \Longleftrightarrow k \in \ker(\Lambda) \big/2\mathbb{Z} 
\end{equation}
where $k$ is the $2n \times 1$ vector with $k_{i} = z_{i}$ and $k_{i+n} = x_{i}$ for $1 \leq i \leq n$.
\end{Lemma}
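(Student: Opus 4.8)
The plan is to unwind both sides of the claimed equivalence into the same concrete algebraic condition over $\mathbb{Z}/2\mathbb{Z}$, namely that the total labelling change produced by the composition $\prod_i O_i$ is the zero vector. The key observation I would isolate first is how a single operation $O_i = Z_i^{z_i} X_i^{x_i}$ acts on a labelling. From Definition \ref{dfn1}, applying $Z_i^{z_i}$ adds $z_i$ to the $i^{\text{th}}$ coordinate, i.e.\ it adds $z_i \, e_i$ where $e_i$ is the $i^{\text{th}}$ standard basis vector; applying $X_i^{x_i}$ adds $x_i \, \gm_{ij}$ to coordinate $j$ for every $j$, i.e.\ it adds $x_i$ times the $i^{\text{th}}$ row (equivalently column, by symmetry) of $\gm$. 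So each $O_i$ is an affine map that adds a fixed vector $w_i = z_i \, e_i + x_i \, \gm^{(i)}$ to the labelling, where $\gm^{(i)}$ denotes the $i^{\text{th}}$ column of $\gm$.

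\textbf{Reducing the composition to a single vector sum.}

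First I would note that since every $Z$ and $X$ operation acts by adding a fixed vector, all these maps commute and composing them simply adds their vectors. Hence
\begin{equation}
\Big(\prod_{i=1}^{n} O_i\Big)(L) = L + \sum_{i=1}^{n} \big(z_i \, e_i + x_i \, \gm^{(i)}\big) \pmod 2 .
\end{equation}
Therefore $\prod_i O_i = I$ (as a map, holding for all $L$, equivalently for $L=0$) if and only if $\sum_{i=1}^{n}\big(z_i \, e_i + x_i \, \gm^{(i)}\big) = 0$ in $\left(\mathbb{Z}/2\mathbb{Z}\right)^n$. The next step is to recognize this sum as a matrix--vector product. The term $\sum_i z_i \, e_i$ is exactly $I_n z$ where $z = (z_1,\dots,z_n)^T$, and $\sum_i x_i \, \gm^{(i)}$ is exactly $\gm x$ where $x = (x_1,\dots,x_n)^T$. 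Writing $k = (z_1,\dots,z_n,x_1,\dots,x_n)^T$ and recalling $\Lambda = [I_n \mid \gm]$ from \eqref{eqn3}, the sum equals $I_n z + \gm x = \Lambda k$. Thus the vanishing condition is precisely $\Lambda k = 0 \pmod 2$, which is the statement $k \in \ker(\Lambda)$ over $\mathbb{Z}/2\mathbb{Z}$, proving both directions of the equivalence simultaneously.

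\textbf{Anticipated obstacle and care points.}

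The main subtlety is justifying the passage from ``$\prod_i O_i$ equals the identity \emph{map}'' to the single vector equation, rather than merely equality at one labelling. Because each $O_i$ is addition of a constant vector independent of the input labelling, the composition is addition of $\sum_i w_i$; an affine translation is the identity map if and only if its translation vector is zero, so evaluating at $L=0$ loses no information. I would state this explicitly to avoid the appearance of checking only one input. A secondary point worth a careful sentence is the commutativity of the operations: since all the $w_i$ are fixed vectors added mod $2$, the order of composition in $\prod_i O_i$ is irrelevant, so the product is well defined and equals the sum regardless of ordering. Finally, I would make sure the column-versus-row reading of $\gm$ is consistent with Definition \ref{dfn1} --- the symmetry of $\gm$ guarantees the $i^{\text{th}}$ row and column coincide, so $X_i$ adding $\gm_{ij}$ to coordinate $j$ is the same as adding the $i^{\text{th}}$ column of $\gm$, matching the block $\gm$ in $\Lambda$.
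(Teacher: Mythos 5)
Your proposal is correct and follows essentially the same route as the paper's own proof: represent each $O_i$ as addition of the fixed vector $z_i e_i + x_i\,\mathrm{col}_i(\gm)$, recognize the accumulated translation as $\Lambda k$, and observe that the composite is the identity map precisely when $\Lambda k = 0 \pmod 2$. Your explicit remarks on commutativity and on why checking the translation vector suffices are welcome clarifications of points the paper leaves implicit, but they do not change the argument.
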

\begin{proof}
Write a labelling $L \in \left(\mathbb{Z}\big/ 2\mathbb{Z}\right)^{n}$ of a graph $G_{n}$ as an $n \times 1$ vector and represent $\langle Z_{i}, X_{i} \rangle$ by additive matrix operations as $Z_{i}(L) \cong L + e_{i} \mod{2}$ and $X_{i}(L) \cong L + \text{col}_{i}(\gm) \mod{2}$, where $e_{i}$ is the $n \times 1$ matrix with $1$ in the $i^{\text{th}}$ entry and $0$ elsewhere and where $\text{col}_{i}(\gm)$ is the $i^{\text{th}}$ column of $\gm$. 
($\Rightarrow$) Then for an arbitrary $O_{i} = Z_{i}^{z_{i}} X_{i}^{x_{i}}$ we have that $O_{i}$ can be represented by an action on the labelling $L$ as $O_{i} \cong L + z_{i}e_{i} + x_{i}\text{col}_{i}(\gm) \mod{2}$. Then for any set of group operations $\{O_{i}\}_{1 \leq i \leq n}$,
\begin{equation}\label{eqn8}
\prod_{i=1}^{n} O_{i} \cong L + \sum_{i=1}^{n} z_{i}e_{i} + \sum_{i=1}^{n} x_{i}\text{col}_{i}(\gm) \mod{2} = L + \Lambda k \mod{2},
\end{equation}
If $\prod_{i=1}^{n} O_{i} = I$, then $\prod_{i=1}^{n} O_{i} \cong L \mod 2$ which yields
\begin{equation}\label{eqn9}
L + \Lambda k  = L \mod{2}
\end{equation}
and therefore every set of group operations  $\{O_{i}\}_{1 \leq i \leq n}$ has a representation as a vector in the kernel of $\Lambda$. \\
($\Leftarrow$) The backward implication follows from rewriting the previous equation, since we now assume that $k$ is some arbitrary vector in the kernel. Use Lemma $1$ to construct a set of group operations corresponding to $k$. Since $k\in \text{ker}(\Lambda)/ 2 \mathbb{Z}$ by assumption, we get that
\begin{align*} L + \Lambda k = L \mod 2 \end{align*}
Then, as in the other direction, we have that $\prod_{i=1}^{n} O_i \cong L$, which implies that $\prod_{i=1}^{n} O_i=I$. \\
Therefore,
\begin{equation}\label{eqn10} 
k \in \ker(\Lambda) \big/2\mathbb{Z} \Longleftrightarrow \prod_{i=1}^{n} O_{i} = I.
\end{equation}
\end{proof}

This leads to the main theorem regarding the computation of diagonal distance.
\begin{Theorem}\label{thm1}
The diagonal distance $\Delta(G_{n})$ of a graph $G_{n}$ can be computed as
\begin{equation}\label{eqn11}
\Delta(G_{n}) = \min\left\{\sum_{i=1}^{n} \chi(k_{i}) > 0 \; \bigg| \; k \in \ker(\Lambda) \big/2\mathbb{Z} \right\}.
\end{equation}
\end{Theorem}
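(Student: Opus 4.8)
The plan is to show that the minimization in Definition \ref{dfn2} and the minimization in \eqref{eqn11} produce the same value by proving each is bounded above by the other, using Lemma \ref{lma1} to match the objective functions termwise and Lemma \ref{lma2} to match the feasible sets. Since those two lemmas already carry all of the analytic content, the theorem should reduce to assembling them into a correspondence between feasible collections $\{O_i\}$ and feasible vectors $k$.

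For the inequality $\Delta(G_n) \ge \min\{\sum_i \chi(k_i) > 0 : k \in \ker(\Lambda)/2\mathbb{Z}\}$, I would start from any feasible collection $\{O_i\}_{i=1}^n$ for Definition \ref{dfn2}, that is, one satisfying $\prod_{i=1}^n O_i = I$ and $\sum_i \eta(O_i) > 0$. Applying Lemma \ref{lma1} writes each $O_i = Z_i^{z_i}X_i^{x_i}$ and produces the vector $k$ with $k_i = z_i$ and $k_{i+n} = x_i$; Lemma \ref{lma2} ($\Rightarrow$) then certifies $k \in \ker(\Lambda)/2\mathbb{Z}$. Because Lemma \ref{lma1} gives $\eta(O_i) = \chi(k_i)$ for each $i$, summing yields $\sum_i \eta(O_i) = \sum_i \chi(k_i)$, so $k$ is feasible for \eqref{eqn11} with exactly the same objective value. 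Hence every value attained in Definition \ref{dfn2} is also attained in \eqref{eqn11}.

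For the reverse inequality I would run the construction backward: given any $k \in \ker(\Lambda)/2\mathbb{Z}$ with $\sum_i \chi(k_i) > 0$, read off exponents $z_i = k_i$ and $x_i = k_{i+n}$ and set $O_i = Z_i^{z_i} X_i^{x_i}$. Lemma \ref{lma2} ($\Leftarrow$) guarantees $\prod_{i=1}^n O_i = I$, and the termwise identity of Lemma \ref{lma1} again gives $\sum_i \eta(O_i) = \sum_i \chi(k_i) > 0$, so this $\{O_i\}$ is feasible for Definition \ref{dfn2} with the same objective. Combining the two directions shows the two feasible sets realize precisely the same collection of objective values, and therefore share the same minimum, which is the asserted identity \eqref{eqn11}.

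I do not anticipate a substantive obstacle, as the statement is essentially the conjunction of the two preceding lemmas. The one point meriting care is the strict-positivity condition $\sum_i \chi(k_i) > 0$, which corresponds to $k \neq 0$ and excludes the trivial all-identity collection: I would verify that this constraint transfers faithfully in both directions, so that neither minimization silently admits the zero vector, which would otherwise collapse the distance to the spurious value $0$.
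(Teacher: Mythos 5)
Your proposal is correct and follows essentially the same route as the paper, which simply invokes Lemma \ref{lma1} and Lemma \ref{lma2} to equate the two minimizations; you merely spell out the two-directional correspondence and the transfer of the positivity constraint in more detail than the paper's one-line argument does.
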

\begin{proof}
By Lemma \ref{lma1} and Lemma \ref{lma2} we have that,
\begin{align}\label{eqn12}
\Delta(G_{n}) &= \min\left\{\sum_{i=1}^{n} \eta(O_{i}) > 0 \; \bigg| \; \prod_{i=1}^{n}O_{i} = I, O_{i} \in \langle Z_{i},X_{i} \rangle, 1 \leq i \leq n \right\} \notag\\
&= \min\left\{\sum_{i=1}^{n} \chi(k_{i}) > 0 \; \bigg| \; k \in \ker(\Lambda) \big/2\mathbb{Z} \right\}.
\end{align}
\end{proof}
An immediate consequence of this theorem is a deterministic algorithm for computing the diagonal distance $\Delta(G_{n})$ of any graph $G_{n}$. To run this algorithm, one must first compute a basis for $\text{ker}(\Lambda)/ 2\mathbb{Z} $. Then the entire kernel must be spanned to check each vector $k$ for $\chi(k)$, and the smallest $\chi(k)$ (along with its corresponding vector, if extra information is desired) is stored. Once the algorithm terminates, the smallest of all the $\chi(k)$s will have been stored and will be equal to the diagonal distance $\Delta(G_{n})$.

\section{Generalization of Diagonal Distance to Multigraphs over $\mathbb{Z}\big/ p\mathbb{Z}$}\label{sct4}
We now generalize the diagonal distance of a graph over $\mathbb{Z}\big/ 2\mathbb{Z}$ to the diagonal distance of a multigraph (a graph that can have multiple edges between vertices) over $\mathbb{Z}\big/ p\mathbb{Z}$, where $p$ is prime. %The generalization requires only replacing the statement of mod 2 for mod p where it is used since most of the aboves proofs do not rely on p = 2 or have equivalent statements in mod p.
%We now generalize the diagonal distance of a graph over $\mathbb{Z}\big/ 2\mathbb{Z}$ to the diagonal distance of a multigraph (a graph that can have multiple edges between vertices) over $\mathbb{Z}\big/ p\mathbb{Z}$, where $p$ is prime. Let $G_{n} = (V,E)$ be a graph with $n$ vertices where $V$ is the vertex set of $G_{n}$ and $E$ is the edge set of $G_{n}$. 
We define a graph labelling exactly as over $\mathbb{Z}\big/ 2\mathbb{Z}$ and note that the set of all graph labellings is now $\left(\mathbb{Z}\big/ p\mathbb{Z}\right)^{n}$. The operations $Z_{i}$ and $X_{i}$ applied to a graph labelling is defined exactly as over $\mathbb{Z}\big/ 2\mathbb{Z}$, except the adjacency matrix $\gm$ of a multigraph $G_{n}$ is defined as the $n \times n$ matrix with $\gm_{ij} = k$ if there are $k$ occurrences of $(v_{i},v_{j}) \in E$ (there are $k$ edges between $v_{i}$ and $v_{j}$) and $\gm_{ij} = 0$ if $(v_{i},v_{j}) \notin E$ (there is no edge between $v_{i}$ and $v_{j}$).
\begin{figure}[h!]
\begin{center}
\includegraphics[scale=0.4]{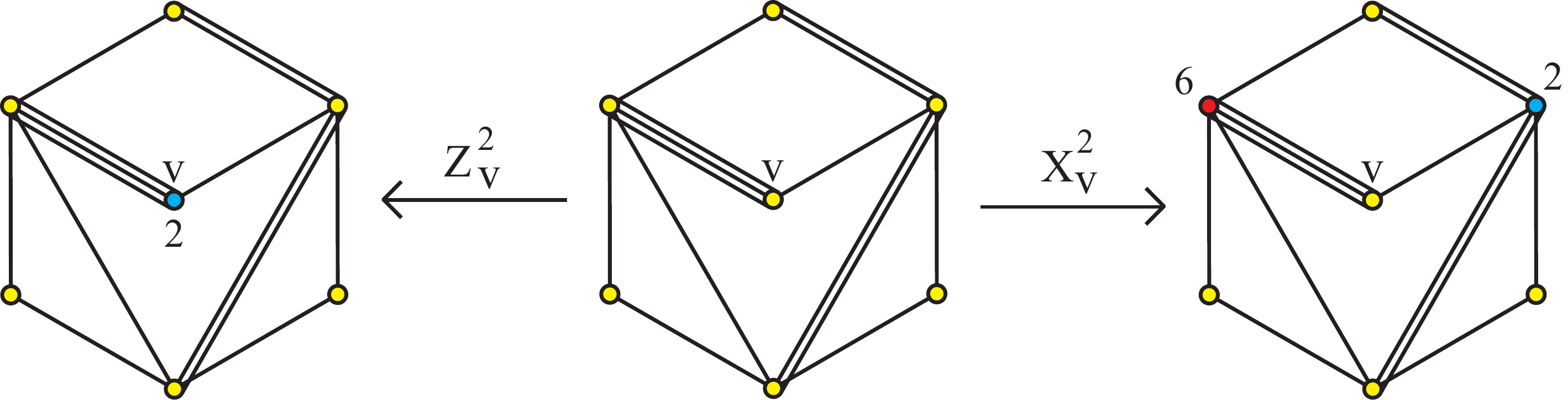}
\caption{An example of the two operations $Z_{i}^2$ and $X_{i}^2$ applied to a multigraph with 7 vertices.}
\end{center}
\end{figure}
We can then define the group of operations $O_{i} = \langle Z_{i},X_{i} \rangle$, with our operations defined over $\mathbb{Z}\big/ p\mathbb{Z}$ in order to obtain the straightforward generalization of Theorem \ref{thm1} to $\mathbb{Z}\big/ p\mathbb{Z}$ by writing mod $p$ everywhere that mod $2$ is written in the proofs of Lemma \ref{lma1}, Lemma \ref{lma2}, and Theorem \ref{thm1}.

\begin{Theorem}\label{thm2}
The diagonal distance $\Delta(G_{n})$ of a graph $G_{n}$ can be computed as,
\begin{equation}\label{eqn13}
\Delta(G_{n}) = \min\left\{\sum_{i=1}^{n} \chi(k_{i}) > 0 \; \bigg| \; k \in \ker(\Lambda) \big/p\mathbb{Z} \right\}.
\end{equation}
\end{Theorem}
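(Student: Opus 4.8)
The plan is to follow the same two-step strategy that produced Theorem \ref{thm1}: establish the mod-$p$ analogues of Lemma \ref{lma1} and Lemma \ref{lma2}, and then combine them exactly as in the proof of Theorem \ref{thm1}. The first point to pin down is that each generator still has finite order after replacing $2$ by $p$. Since $Z_i$ and $X_i$ act by repeatedly adding a fixed vector, applying either of them $p$ times adds $p$ copies of that vector, which is $0 \bmod p$, so $Z_i^p = X_i^p = I$. Hence every $O_i \in \langle Z_i, X_i\rangle$ admits a representation $O_i = Z_i^{z_i} X_i^{x_i}$ with exponents reduced to $z_i, x_i \in \mathbb{Z}\big/p\mathbb{Z}$, and I would assemble the $2n \times 1$ vector $k$ with $k_i = z_i$, $k_{i+n} = x_i$ just as before.

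For the mod-$p$ version of Lemma \ref{lma1}, I would observe that $\eta$ and $\chi$ are both merely indicators of nonvanishing: $\eta(O_i) = 0$ exactly when $O_i = I$, and $\chi(k_i) = 0$ exactly when $(k_i, k_{i+n}) = (0,0)$. Choosing the representation of $O_i$ with smallest support (so that $x_i = 0$ whenever $X_i$ already acts as the identity, which happens precisely when $\text{col}_i(\gm) \equiv 0 \bmod p$), the equivalence $O_i = I \iff z_i = x_i = 0$ holds, and this is exactly the statement $\eta(O_i) = \chi(k_i)$. Nothing in this argument uses that the modulus is $2$, so it transfers verbatim to $\mathbb{Z}\big/p\mathbb{Z}$.

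For the mod-$p$ version of Lemma \ref{lma2}, I would again represent the generators additively on a labelling $L \in \left(\mathbb{Z}\big/p\mathbb{Z}\right)^n$ by $Z_i(L) \cong L + z_i e_i \bmod p$ and $X_i(L) \cong L + x_i\,\text{col}_i(\gm) \bmod p$. Because every $O_i$ is a translation of the abelian group $\left(\mathbb{Z}\big/p\mathbb{Z}\right)^n$, the operations commute and compose additively, so
\begin{equation*}
\prod_{i=1}^n O_i \cong L + \sum_{i=1}^n z_i e_i + \sum_{i=1}^n x_i\,\text{col}_i(\gm) = L + \Lambda k \bmod p.
\end{equation*}
Setting this equal to $L$ gives $\Lambda k \equiv 0 \bmod p$ in each direction, i.e. $\prod_{i=1}^n O_i = I \iff k \in \ker(\Lambda)\big/p\mathbb{Z}$, reproducing Lemma \ref{lma2}. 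Substituting these two equivalences into Definition \ref{dfn2} then yields \eqref{eqn13} exactly as \eqref{eqn12} was obtained.

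The step I expect to require the most care is not any individual computation but the bookkeeping of where primality of $p$ is genuinely used. Reducing exponents only needs $Z_i^p = X_i^p = I$, which holds for any modulus; the essential use of $p$ prime is in the passage from the group-theoretic condition $\prod O_i = I$ to the linear-algebraic object $\ker(\Lambda)\big/p\mathbb{Z}$, and in the algorithm that spans this kernel, both of which rely on $\mathbb{Z}\big/p\mathbb{Z}$ being a field so that $\ker(\Lambda)$ is an honest vector space with a basis. I would therefore flag explicitly that the injectivity used in the minimal-support step of the Lemma \ref{lma1} analogue (the fact that $(x_i-x_i')\,\text{col}_i(\gm)\equiv 0$ forces $x_i = x_i'$ once $\text{col}_i(\gm)\not\equiv 0$) is where field-ness silently enters, and that for composite moduli the nullspace need not be free, so the ``span the kernel'' algorithm would need modification. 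For prime $p$, however, every part of the $\mathbb{Z}\big/2\mathbb{Z}$ argument carries through with $2$ replaced by $p$, which is precisely the claim.
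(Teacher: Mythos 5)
Your proposal is correct and follows exactly the route the paper itself takes: the paper's ``proof'' of Theorem \ref{thm2} is the one-line remark that one obtains it by writing $\bmod\ p$ everywhere that $\bmod\ 2$ appears in Lemmas \ref{lma1}, \ref{lma2} and Theorem \ref{thm1}, and your write-up is precisely that argument carried out in detail. Your added observations --- choosing the minimal-support representation $O_i = Z_i^{z_i}X_i^{x_i}$ so that $\eta(O_i)=\chi(k_i)$ survives vertices with $\mathrm{col}_i(\gm)\equiv 0$, and the flag that primality is only genuinely needed for $\ker(\Lambda)$ to be a vector space with a basis --- are more careful than anything in the paper and strengthen rather than alter the argument.
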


\section{Minimal Diagonal Distance of Codes Over $\mathbb{Z}\big/ p\mathbb{Z}$}\label{sct5}
Thus far we have considered the diagonal distance $\Delta(G_{n})$ over $\mathbb{Z}\big/ 2\mathbb{Z}$ of a graph $G_{n}$ and, more generally, the diagonal distance $\Delta(G_{n})$ over $\mathbb{Z}\big/ p\mathbb{Z}$ of a multigraph $G_{n}$. In either of these cases, we built into the definition of diagonal distance that $\prod_{i=1}^{n} O_{i} = I$ where $I: \left(\mathbb{Z}\big/ p\mathbb{Z}\right)^{n} \rightarrow \left(\mathbb{Z}\big/ p\mathbb{Z}\right)^{n}$ is the identity map. Yet, we can extend this notion to requiring that $\prod_{i=1}^{n} O_{i} = L$, where $L: \left(\mathbb{Z}\big/ p\mathbb{Z}\right)^{n} \rightarrow \left(\mathbb{Z}\big/ p\mathbb{Z}\right)^{n}$ is some specific map, e.g. the map sending the configuration $\vec{0}$ to $\vec{1}$.

Let $\{C_{1},...,C_{k}\}$ be a set of graph labellings (i.e. codewords), where $C_{1} = \vec{0}$. We then want to determine the diagonal distance between any two graph labellings $C_{r}$ and $C_{s}$ in this set, the distance of the code, which can be defined as follows.

\begin{Definition}
\label{dfn3}
For a multigraph $G_{n}$ and a set of graph labellings $\{C_{1},C_{2},...,C_{k}\}$, the diagonal distance $\Delta_{rs}(G_{n})$ between two graph labellings $C_{r}$ and $C_{s}$ is defined as,
\begin{equation}\label{eqn14}
\Delta_{rs}(G_{n}) = \min\left\{\sum_{i=1}^{n} \eta(O_{i})>0 \; \bigg| \; \prod_{i=1}^{n}O_{i} = L_{rs}, O_{i} \in \langle Z_{i},X_{i} \rangle, 1 \leq i \leq n \right\}
\end{equation}
where $G_{n}$ is assumed to be originally labelled by $C_{r}$ and $L_{rs}$ is the map sending the graph labelling $C_{r}$ to the labelling $C_{s}$.
\end{Definition}

Similarly to computing $\Delta(G_{n})$, we can minimize $\Delta_{rs}(G_{n})$ by minimizing the summation of the characteristic $\chi$-function of vector $k^\prime$ such that $\Lambda k^\prime = C_{r}-C_{s} \mod{p}$.  It is a straightforward generalization of Lemma \ref{lma1} and Lemma \ref{lma2} in order to obtain the analogue of Theorem \ref{thm2} for distance of a code.
%The solutions of this system will be of the form $v_{1}+v_{0}$, where $v_{1}$ is a particular solution of this system and $v_{0}$ is in the kernel of $\Lambda$.
\begin{Theorem}\label{thm3}
The generalized diagonal distance $\Delta_{rs}(G_{n})$ of a graph $G_{n}$ can be computed as,
\begin{equation}\label{eqn15}
\Delta_{rs}(G_{n}) = \min\left\{\sum_{i=1}^{n} \chi(k_{i}^\prime) > 0 \; \bigg| \; \Lambda k^\prime = C_{r}-C_{s} \mod{p} \right\}
\end{equation}
where $G_{n}$ is originally labelled by $C_{r}$.
\end{Theorem}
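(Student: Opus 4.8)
The plan is to mirror the proofs of Lemma \ref{lma1}, Lemma \ref{lma2}, and Theorem \ref{thm2}, replacing the homogeneous kernel condition with an affine (inhomogeneous) linear condition. First I would observe that the generalized Lemma \ref{lma1} requires no modification: the identity $\eta(O_i) = \chi(k_i')$ concerns only the individual operation $O_i = Z_i^{z_i}X_i^{x_i}$ and the way its exponents are recorded in the entries $k_i' = z_i$ and $k_{i+n}' = x_i$, and this reasoning is insensitive to what the total product $\prod_i O_i$ equals. Reading every ``mod $2$'' as ``mod $p$'', the same case analysis gives $\eta(O_i) = \chi(k_i')$ for each $i$, hence $\sum_i \eta(O_i) = \sum_i \chi(k_i')$.

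Next I would establish the generalized Lemma \ref{lma2}, which is where the new constraint enters. Representing the operations additively exactly as before, $Z_i(L) \cong L + z_i e_i$ and $X_i(L) \cong L + x_i \text{col}_i(\gm)$, and now taking the initial labelling to be $L = C_r$, the same computation as in equation (\ref{eqn8}) gives
\begin{equation*}
\prod_{i=1}^{n} O_i \cong C_r + \Lambda k' \mod p.
\end{equation*}
The condition $\prod_{i=1}^n O_i = L_{rs}$ means precisely that this product sends $C_r$ to $C_s$, i.e. $C_r + \Lambda k' = C_s \mod p$, which rearranges to $\Lambda k' = C_s - C_r \mod p$. Since negating a solution $k'$ leaves every $\chi(k_i')$ unchanged and turns a solution of $\Lambda k' = C_s - C_r$ into a solution of $\Lambda k' = C_r - C_s$, the two affine systems index the same multiset of $\chi$-values, so I may freely use the form $\Lambda k' = C_r - C_s \mod p$ stated in the theorem. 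Running the argument in both directions (using Lemma \ref{lma1} to build the $O_i$ from any solution $k'$) yields $\prod_{i=1}^n O_i = L_{rs} \Longleftrightarrow \Lambda k' = C_r - C_s \mod p$.

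With both lemmas in hand the theorem follows by substitution, exactly as Theorem \ref{thm2} followed from its lemmas: in the defining minimization (\ref{eqn14}) I replace each $\eta(O_i)$ by $\chi(k_i')$ and replace the operator constraint $\prod_i O_i = L_{rs}$ by the affine constraint $\Lambda k' = C_r - C_s \mod p$, producing (\ref{eqn15}).

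The one genuinely new point, and the step I would treat most carefully, is that the constraint is now inhomogeneous: the feasible set is a coset $k_0' + \ker(\Lambda)$ rather than the subspace $\ker(\Lambda)$ itself, so I should verify that this coset is nonempty, i.e. that the system is always consistent. This is immediate from the shape of $\Lambda = [\,I_n \mid \gm\,]$: because of the identity block the columns of $\Lambda$ already span all of $\left(\mathbb{Z}\big/p\mathbb{Z}\right)^{n}$, so for any target $C_r - C_s$ the vector whose first block is $C_r - C_s$ and whose second block is $\vec{0}$ is a solution. Hence $\Delta_{rs}(G_n)$ is well-defined; moreover, when $C_r \neq C_s$ the target is nonzero, forcing $\sum_i \chi(k_i') > 0$ automatically, while when $C_r = C_s$ the system reduces to $k' \in \ker(\Lambda)$ and the statement collapses back to Theorem \ref{thm2}.
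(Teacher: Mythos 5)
Your proposal is correct and follows essentially the same route the paper takes, which is simply to assert that Theorem \ref{thm3} follows by the straightforward generalization of Lemma \ref{lma1} and Lemma \ref{lma2} with the homogeneous condition $\Lambda k = 0$ replaced by the affine condition $\Lambda k^\prime = C_{r}-C_{s} \bmod p$. In fact you supply details the paper omits entirely --- the sign discrepancy between $C_{s}-C_{r}$ (what the derivation naturally produces) and $C_{r}-C_{s}$ (what the theorem states), resolved by negating $k^\prime$, and the consistency of the inhomogeneous system via the identity block of $\Lambda$ --- so your write-up is, if anything, more complete than the paper's.
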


Note that the distance $\delta$ of a code $\{C_{1},C_{2},...,C_{k}\}$ is simply min$\{ \Delta_{rs}(G_{n}) \; \mid \;  r,s \in \{1,2,...k\} \}$

\section{Computational Example}\label{sct6}
We run the main algorithm on the $5$-cycle graph in $\mathbb{Z}/2\mathbb{Z}$. It is well-known that the diagonal distance of the $5$-cycle is $3$, for example by the set of operations shown in Figure 3. This can be found directly using our algorithm.
\begin{figure}[h!]
\begin{center}
\includegraphics[scale=0.4]{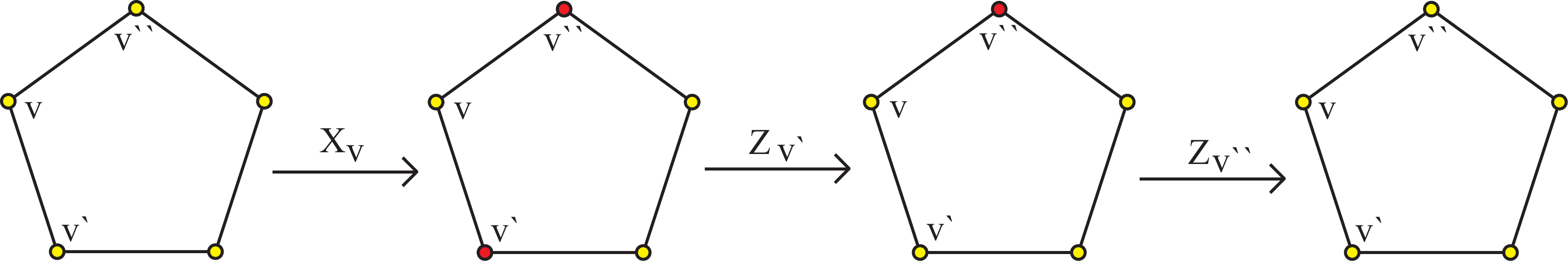}
\caption{An example of three operations that illustrate the diagonal distance of the $5$-cycle.}
\end{center}
\end{figure}
To compute the diagonal distance, we must first set up the matrix $\Lambda = [I_n \; | \; \Gamma]$. For the $5$-cycle, this is a $5 \times 10$ matrix which has many possibilities depending on how the vertices are numbered. However, diagonal distance is invariant under different numberings and so we will just pick one.
\begin{align*} \Lambda = 
\begin{bmatrix}
1 & 0 & 0 & 0 & 0  & 0 & 1 & 0 & 0 & 1\\
0 & 1 & 0 & 0 & 0 & 1 & 0 & 1 & 0 & 0 \\
0 & 0 & 1 & 0 & 0 & 0 & 1 & 0 & 1 & 0\\
0 & 0 & 0 & 1 & 0 & 0 & 0 & 1 & 0 & 1\\
0 & 0 & 0 & 0 & 1 & 1 & 0 & 0 & 1 & 0
\end{bmatrix} \end{align*}
Next we must compute the kernel of this matrix. Some computation gives that the kernel has the following basis vectors.
\begin{align*} k_1 &= (0,1,0,0,1,1,0,0,0,0)^{T} \\
k_2&= (1,0,1,0,0,0,1,0,0,0)^{T} \\
k_3&= (0,1,0,1,0,0,0,1,0,0)^{T}\\
k_4&= (0,0,1,0,1,0,0,0,1,0)^{T}\\
k_5&= (1,0,0,1,0,0,0,0,0,1)^{T}
\end{align*}
To find the vector in the kernel with the least amount of $1$'s in it, we must search through all the linear combinations of these basis vectors. It is easy enough to write a program to do this. For this example, it actually turns out that the linear combinations with the least amount of $1$'s in them are exactly the basis vectors themselves. Notice that all of these vectors have $3$ $1$'s in them. Therefore, the diagonal distance of the $5$-cycle graph is $3$.

\section*{Acknowledgments}
V. Gheorghiu acknowledges support from the Natural Sciences and Engineering Research Council (NSERC) of Canada and from the Pacific Institute for the Mathematical Sciences (PIMS).

%\bibliographystyle{unsrt}
%\bibliography{../../bibliography/mybib}

\noindent
German Luna
\newline
Department of Mathematics and Statistics, University of Calgary, Calgary AB T2N 1N4, Canada
\newline
{\sf E-mail: galunapa@ucalgary.ca}
\newline
\newline
Samuel Reid
\newline
Department of Mathematics and Statistics, University of Calgary, Calgary AB T2N 1N4, Canada
\newline
{\sf E-mail: smrei@ucalgary.ca} 
\newline
\newline
Bianca De Sanctis
\newline
Department of Mathematics and Statistics, University of Calgary, Calgary AB T2N 1N4, Canada
\newline
{\sf E-mail: bddesanc@ucalgary.ca}
\newline
\newline
Vlad Gheorghiu
\newline
Department of Mathematics and Statistics, University of Calgary, Calgary AB T2N 1N4, Canada
\newline
Institute for Quantum Science and Technology, Calgary AB T2N 1N4, Canada
\newline
{\sf E-mail: vgheorgh@ucalgary.ca}

\end{document}